\documentclass[12pt]{article} 

\usepackage{amsmath}
\usepackage{amsthm}
\usepackage{amsfonts}
\usepackage{mathrsfs}
\usepackage{stmaryrd}
\usepackage{setspace}
\usepackage{fullpage}
\usepackage{amssymb}
\usepackage{breqn}
\usepackage{enumitem}
\usepackage{bbold} 
\usepackage{authblk}
\usepackage{comment}
\usepackage{hyperref}
\usepackage{pgf,tikz}
\usepackage{graphicx}
\usepackage{subcaption}

\bibliographystyle{plain}
\newtheorem*{thm*}{Theorem}
\newtheorem{thm}{Theorem}[section]

\newtheorem{lemma}[thm]{Lemma}

\newtheorem{proposition}[thm]{Proposition}
\newtheorem*{prop*}{Proposition}
\newtheorem{prop}[thm]{Proposition}

\newtheorem{corollary}[thm]{Corollary}

\newtheorem{clm}[thm]{Claim}

\DeclareMathOperator{\Sym}{Sym}
\DeclareMathOperator{\Sep}{Sep}
\DeclareMathOperator{\Age}{Age}
\DeclareMathOperator{\Stab}{Stab}
\DeclareMathOperator{\id}{id}
\DeclareMathOperator{\Inv}{Inv}
\DeclareMathOperator{\Aut}{Aut}
\DeclareMathOperator{\End}{End}
\DeclareMathOperator{\Pol}{Pol}
\DeclareMathOperator{\Proj}{Proj}
\DeclareMathOperator{\Cl}{Cl}
\DeclareMathOperator{\Cll}{Cl_{loc}}
\DeclareMathOperator{\pol}{Pol}
\DeclareMathOperator{\Typ}{Typ}
\DeclareMathOperator{\typ}{Typ}
\DeclareMathOperator{\Type}{Typ}
\DeclareMathOperator{\ran}{ran}
\DeclareMathOperator{\dom}{dom}
\DeclareMathOperator{\cyc}{cycl}
\DeclareMathOperator{\sw}{sw}
\DeclareMathOperator{\sep}{sep}
\DeclareMathOperator{\betw}{betw}
\DeclareMathOperator{\sync}{sync}
\DeclareMathOperator{\link}{link}
\DeclareMathOperator{\wlink}{wlink}
\DeclareMathOperator{\pp}{pp}

\newcommand\ex{\ensuremath{\mathrm{ex}}}
\newcommand\cA{{\mathcal A}}
\newcommand\cB{{\mathcal B}}
\newcommand\cC{{\mathcal C}}
\newcommand\cD{{\mathcal D}}
\newcommand\cE{{\mathcal E}}
\newcommand\cF{{\mathcal F}}
\newcommand\cG{{\mathcal G}}
\newcommand\cH{{\mathcal H}}
\newcommand\cI{{\mathcal I}}
\newcommand\cJ{{\mathcal J}}
\newcommand\cK{{\mathcal K}}
\newcommand\cL{{\mathcal L}}
\newcommand\cM{{\mathcal M}}
\newcommand\cN{{\mathcal N}}
\newcommand\cP{{\mathcal P}}
\newcommand\cY{{\mathcal Y}}
\newcommand\cQ{{\mathcal Q}}
\newcommand\cR{{\mathcal R}}
\newcommand\cS{{\mathcal S}}
\newcommand\cT{{\mathcal T}}
\newcommand\cU{{\mathcal U}}
\newcommand\cV{{\mathcal V}}
\newcommand\PP{{\mathbb P}}
\renewcommand{\P}{\mathbb P}
\renewcommand{\S}{\mathcal S}

\newcommand{\cl}[1]{\langle #1 \rangle}
\newcommand{\To}{\rightarrow}
\newcommand{\nin}{\notin}
\newcommand{\inv}{^{-1}}
\newcommand{\mult}{\times}

\DeclareMathOperator{\tp}{tp}

\DeclareMathOperator{\Rev}{Rev}
\DeclareMathOperator{\Frlim}{Frlim}
\DeclareMathOperator{\R}{Rev}
\DeclareMathOperator{\T}{Turn}
\DeclareMathOperator{\M}{Max}
\DeclareMathOperator{\pari}{Par}

\def\lc{\left\lceil}   
\def\rc{\right\rceil}

\def\lf{\left\lfloor}   
\def\rf{\right\rfloor}

\renewcommand{\qedsymbol}{$\blacksquare$}
\newcommand{\rev}{\updownarrow}
\newcommand{\Puv}{(\P,u,v)}
\newcommand{\oo}{\pari}
\newcommand{\oz}{\cyc}
\newcommand{\od}{\cyc'}
\newcommand{\tf}{\triangleleft_F}
\newcommand{\abs}[1]{\left\lvert{#1}\right\rvert}

\newcommand{\ignore}[1]{}

\newcommand{\clfo}[1]{\langle #1 \rangle_{fo}}

\newcommand{\rest}{\upharpoonright}

\title{On non-degenerate Turán problems for expansions}
\author{Dániel Gerbner\\\small Alfr\'ed R\'enyi Institute of Mathematics\\
\small \texttt{gerbner.daniel@renyi.hu}}

\date{}

\begin{document}

\maketitle

\begin{abstract}
The $r$-uniform expansion $F^{(r)+}$ of a graph $F$ is obtained by enlarging each edge with $r-2$ new vertices such that altogether we use $(r-2)|E(F)|$ new vertices. Two simple lower bounds on the largest number $\ex_r(n,F^{(r)+})$ of $r$-edges in $F^{(r)+}$-free $r$-graphs are $\Omega(n^{r-1})$ (in the case $F$ is not a star) and $\ex(n,K_r,F)$, which is the largest number of $r$-cliques in $n$-vertex $F$-free graphs. We prove that $\ex_r(n,F^{(r)+})=\ex(n,K_r,F)+O(n^{r-1})$. The proof comes with a structure theorem that we use to determine $\ex_r(n,F^{(r)+})$ exactly for some graphs $F$, every $r<\chi(F)$ and sufficiently large $n$.
\end{abstract}

\textbf{Keywords:} expansion, Tur\'an



\section{Introduction}

A fundamental theorem in extremal combinatorics is due to Tur\'an \cite{T}, who showed that if an $n$-vertex graph $G$ does not contain $K_{k+1}$ as a subgraph, then $G$ cannot have more edges than the \textit{Tur\'an graph} $T_2(n,k)$, which is the complete balanced $n$-vertex $k$-partite graph. Here \textit{balanced} means that each part is of order either $\lfloor n/k\rfloor$ or $\lceil n/k\rceil$. 

More generally, given a graph $F$, we denote by $\ex(n,F)$ the largest number of edges an $n$-vertex $F$-free graph can have. Let $t_2(n,k)$ denote the number of edges in $T_2(n,k)$.
Erd\H os, Stone and Simonovits \cite{ES1966,ES1946} showed that if a graph $F$ has chromatic number $k+1>2$, then $\ex(n,F)=(1+o(1))t_2(n,k)$. Another result relevant to our studies deals with color-critical edges. An edge of $F$ is said to be \textit{color-critical} if deleting that edge decreases the chromatic number. Simonovits \cite{sim} showed that if $F$ has chromatic number $k+1$ and a color-critical edge, then for sufficiently large $n$ we have $\ex(n,F)=t_2(n,k)$.

Analogous questions can be asked about $r$-uniform hypergraphs ($r$-graphs in short). For an $r$-graph $\cF$ we denote by $\ex_r(n,\cF)$ the largest number of hyperedges in $\cF$-free $n$-vertex $r$-graphs. Let $T_r(n,k)$ denote the complete balanced $k$-partite $r$-graph and $t_r(n,k)$ denote the number of its hyperedges.

Hypergraph Tur\'an problems are notoriously more difficult than the graph versions. Therefore, it is desirable to study forbidden hypergraphs where graph theoretic methods can be applied. This often happens if the hypergraph is built from a graph by enlarging its edges. Chapter 5 of \cite{gp} contains a survey of hypergraph Tur\'an problems with special emphasis on graph-based hypergraphs.

\textit{Expansions} are among the most studied graph-based hypergraphs. Given a graph $F$, its $r$-uniform expansion $F^{(r)+}$ is obtained by adding $r-2$ new vertices to each edge, such that the $|E(F)|(r-2)$ new vertices are distinct from each other and are not in $V(F)$.

There are several early hypergraph results that can be restated in the language of expansions, see the survey \cite{mubver} by Mubayi and Verstra\"ete. It is easy to see (see \cite{mubver}) that $\ex_r(n,F^{(r)+})=\Theta(n^r)$ if and only if $\chi(F)>r$. This case is called \textit{non-degenerate} and is the main subject of the present paper.

Mubayi \cite{mub} showed that if $k\ge r$, then $\ex_r(n,K_{k+1}^{(r)+})=(1+o(1))t_r(n,k)$. He conjectured and Pikhurko \cite{pikhu} proved that for sufficiently large $n$ we have $\ex_r(n,K_{k+1}^{(r)+})=t_r(n,k)$.
The survey \cite{mubver} mentions that Alon and Pikhurko observed that the proof extends to any $(k+1)$-chromatic graph $F$ with a color-critical edge in place of $K_{k+1}$. It is also stated in \cite{mubver} that for any $F$ with $\chi(F)=k+1>r$, $\ex_r(n,F^{(r)+})=(1+o(1))t_r(n,k)$ (they present the key ideas of a proof, for a detailed proof see \cite{pttw}).
We are aware of only one other result on $\ex_r(n,F^{(r)+})$ in the non-degenerate regime (i.e., in the case $\chi(F)>r$), due to Tang, Xu and Yan \cite{txy}.

Given a graph $F$ with $\chi(F)=k+1$, its decomposition family $\cD(F)$ is the family of bipartite graphs obtained by deleting $k-1$ color classes from a proper $(k+1)$-coloring of $F$. Let $\mathrm{biex}(n,F):=\ex(n,\cD(F))$. Tang, Xu and Yan \cite{txy} claim that if $k\ge r$, then $\ex_r(n,F^{(r)+})=t_r(n,k)+\Theta(\mathrm{biex}(n,F)n^{r-2})$. Unfortunately, this is not completely true, there is a case neglected in \cite{txy}. It is claimed there that either $\mathrm{biex}(n,F)=0$, or $\mathrm{biex}(n,F)\ge n/2$. This is true if there is only one forbidden graph in $\cD(F)$, but not in general. Consider the graph $B_{k+1,1}$, which consists of two copies of $K_{k+1}$ sharing exactly one vertex. It is easy to see that $\cD(B_{k+1,1})$ contains both the matching and the star with two edges, thus $\mathrm{biex}(n,B_{k+1,1})=1$. The above-mentioned result of \cite{txy} does not hold for graphs $F$ with $\mathrm{biex}(n,F)=\Theta(1)$ (but holds for each other graph). First, we state a corrected version.

\begin{prop}\label{corre} Let $\chi(F)=k+1>r$ and $\mathrm{biex}(n,F)=\Theta(1)$. Then $\ex_r(n,F^{(r)+})=t_r(n,k)+\Theta(n^{r-1})$.
\end{prop}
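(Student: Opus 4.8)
The plan is to prove the two matching estimates $\ex_r(n,F^{(r)+})\le t_r(n,k)+O(n^{r-1})$ and $\ex_r(n,F^{(r)+})\ge t_r(n,k)+\Omega(n^{r-1})$, whose combination is the assertion. For the upper bound I would apply the main theorem $\ex_r(n,F^{(r)+})=\ex(n,K_r,F)+O(n^{r-1})$ to reduce to the generalized Tur\'an number. The number of $r$-cliques in the Tur\'an graph $T_2(n,k)$ equals $t_r(n,k)$, since such a clique is exactly a transversal $r$-set of the classes. Using $\chi(F)=k+1$ and $\mathrm{biex}(n,F)=O(1)$, a standard Erd\H os--Simonovits stability and cleaning argument shows that a $K_r$-maximal $F$-free graph arises from $T_2(n,k)$ by inserting only $O(1)$ edges inside the classes, each of which lies in $O(n^{r-2})$ copies of $K_r$; hence $\ex(n,K_r,F)=t_r(n,k)+O(n^{r-2})$, and the upper bound follows with room to spare.

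The substance is the lower bound, and here the point overlooked in \cite{txy} is that one should add genuinely degenerate hyperedges (those not transversal to the $k$ classes) rather than merely extra cliques. I would fix a balanced partition $V_1\cup\dots\cup V_k$ and a vertex $v_0\in V_1$, and set $H=T_r(n,k)\cup D$, where $D$ is the family of all $r$-edges $\{v_0,a,b\}\cup T$ with $a\ne b$ in $V_2$ and $T$ a transversal of some $r-3$ of the classes $V_3,\dots,V_k$. Then $|D|=\Theta(n^{r-1})$ (this uses $r\ge 3$; for $r=2$ the claim is the classical Simonovits regime), so $|E(H)|=t_r(n,k)+\Omega(n^{r-1})$, and it remains to verify that $H$ is $F^{(r)+}$-free.

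For the freeness I would argue by contradiction, exploiting that $v_0$ plays a single role in any copy. Colour each core vertex by the index of the class containing its image: every edge realised by a hyperedge of $T_r(n,k)$ is then bichromatic, while every edge realised by a hyperedge of $D$ contains $v_0$. If $v_0$ lies outside the copy, all edges are transversal, so the colouring is a proper $k$-colouring of $F$, contradicting $\chi(F)=k+1$. If $v_0$ is an expansion vertex, it belongs to exactly one hyperedge, so all other edges are transversal and deleting the single edge through $v_0$ properly $k$-colours $F$; that edge would then be color-critical, which is impossible since $\mathrm{biex}(n,F)>0$ forces $F$ to have no color-critical edge (no single edge lies in $\cD(F)$). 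Finally, if $v_0=\phi(w)$ is a core vertex, then every edge of $F$ incident to $w$ places its other endpoint outside $V_1$ (true for both $T_r(n,k)$- and $D$-edges through $v_0$), while every edge avoiding $w$ is transversal; together these yield a proper $k$-colouring of $F$, again a contradiction.

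I expect the freeness verification to be the main obstacle: one must rule out that the many degenerate edges of $D$ conspire to complete a copy of $F^{(r)+}$, and the argument rests squarely on the two hypotheses $\chi(F)>k$ and $\mathrm{biex}(n,F)>0$. On the upper-bound side the only delicate point is the sharp generalized Tur\'an estimate, but since the main theorem already tolerates an $O(n^{r-1})$ error, even a crude form of that estimate suffices.
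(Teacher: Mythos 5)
Your proof is correct and takes essentially the same route as the paper: the upper bound via Theorem \ref{main} combined with the Ma--Qiu estimate $\ex(n,K_r,F)=t_r(n,k)+\Theta(\mathrm{biex}(n,F)n^{r-2})$, and the lower bound via an explicit construction adding $\Theta(n^{r-1})$ degenerate hyperedges through a single vertex, whose $F^{(r)+}$-freeness is forced exactly as in the paper by $\chi(F)=k+1$ together with the absence of a colour-critical edge (your family $D$ is in fact a subfamily of the paper's construction $\cH_r'(n,k)$, and your case analysis mirrors the paper's auxiliary proposition). The only inaccuracy is the parenthetical about $r=2$, where the claimed $+\Theta(n)$ actually fails (e.g.\ $\ex(n,B_{k+1,1})=t_2(n,k)+1$), but that case lies outside the intended range $r\ge 3$.
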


Given $p$-graphs $\cH$ and $\cG$, let $\cN(\cH,\cG)$ denote the number of copies of $\cH$ in $\cG$. Given $p$-graphs $\cH$ and $\cF$, we let $\ex_p(n,\cH,\cF)=\max\{\cN(\cH,\cG): \, \text{$\cG$ is an $n$-vertex $\cF$-free $p$-graph}\}$. We omit $p$ from the notation if $p=2$. After several sporadic results, the systematic study of this generalized Tur\'an function was initiated by Alon and Shikhelman \cite{ALS2016} in the graph case. They showed that if $\chi(F)=k+1>r$, then $\ex(n,K_r,F)=(1+o(1))t_r(n,k)$. Ma and Qiu \cite{mq} improved this by showing that $\ex(n,K_r,F)=t_r(n,k)+\Theta(\mathrm{biex}(n,F)n^{r-2})$.

Note that we have the simple bound $\ex_r(n,F^{(r)+})\ge\ex(n,K_r,F)$ for any $r$, $n$ and $F$. Indeed, let us consider an $F$-free $n$-vertex graph $G$ and take the vertex set of every $K_r$ as a hyperedge. The resulting $r$-graph is clearly $F^{(r)+}$-free. If $F$ is not a star, then another simple $F^{(r)+}$-free construction is obtained by taking all the $\binom{n-1}{r-1}$ hyperedges containing a fixed vertex.

We show that the order of magnitude of $\ex_r(n,F^{(r)+})$ is given by the larger of these lower bounds.

\begin{thm}\label{main} For any graph $F$ and integer $r\ge 2$, we have $\ex_r(n,F^{(r)+})=\ex(n,K_r,F)+O(n^{r-1})$.
\end{thm}

In fact, we prove the following more general version.

\begin{thm}\label{main2} For any graph $F$ and integers $r\ge p\ge 2$, we have $\ex_r(n,F^{(r)+})=\ex_p(n,K_r^p,F^{(p)+})+O(n^{r-1})$.
\end{thm}

This immediately gives an improvement in the non-degenerate case if we know $\ex(n,K_r,F)$, and $\mathrm{biex}(n,F)$ is superlinear. Note that we obtain improvements in the degenerate regime as well if $\ex(n,K_r,F)=\omega(n^{r-1})$. For example Alon and Shikhelman \cite{ALS2016} showed that $\ex(n,K_3,K_{s,t})=O(n^{3-3/s})$, and provided a matching lower bound in the case $t>(s-1)!$. Kostochka, Mubayi and Verstrate \cite{kmv} gave the same bounds on $\ex_3(n,K_{s,t}^{(3)+})$. Theorem \ref{main} shows that these results are equivalent, moreover, if $t>(s-1)!>2$, then $\ex(n,K_3,K_{s,t})$ and $\ex_3(n,K_{s,t}^{(3)+})$ not only have the same order of magnitude, but also the same asymptotics.

According to \cite{mubver}, one of the main questions in the topic of expansions is to determine when $\ex_3(n,F^{(3)+})=\Theta(n^2)$. By Theorem \ref{main}, this happens if and only if $\ex(n,K_3,F)=O(n^2)$. 

We can use our methods to obtain some exact results. Let $T_r(n,k,i)$ denote the following $r$-graph. We take a copy of $T_r(n-i,k)$, add $i$ vertices and each $r$-set containing at least one of those $i$ vertices as a hyperedge. 

Let $\cH(m)$ be the complete $k$-partite $n$-vertex $r$-graph with a part $V_1$ of order  $m$ and $k-1$ parts $V_2,\dots,V_k$ each of order either $\lfloor \frac{n-m}{k-1}\rfloor$ or $\lceil \frac{n-m}{k-1}\rceil$. In other words, we take a part of order $m$ and a $(k-1)$-partite Tur\'an hypergraph on the remaining vertices. Let $u,v\in V_1$. Let $\cH'(m)$ denote the $r$-graph obtained from $\cH(m)$ by adding as hyperedges each $r$-set that contains both $u$ and $v$, and each $r$-set that contains $u$ and $r-1$ vertices from parts other than $V_1$. 
Let $\cH_r(n,k)$ denote $\cH'(m)$ with the most hyperedges. Recall that  $B_{k+1,1}$ consists of two copies of $K_{k+1}$ sharing exactly one vertex.


\begin{thm}\label{exac} \textbf{(i)} Let $F$ consist of $s$ components with chromatic number $k+1$, each with
a color-critical edge, and any number of components with chromatic number at most $k$. Then
$\ex_r(n,F^{(r)+})=|E(T_r(n,k,s-1))|$ for sufficiently large $n$.

\textbf{(ii)}
$\ex_r(n,B_{k+1,1}^{(r)+})=|E(\cH_r'(n,k))|$ for sufficiently large $n$.
\end{thm}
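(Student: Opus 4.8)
The plan is to prove matching lower and upper bounds in both parts, the lower bounds coming from verifying that the stated constructions are expansion-free and the upper bounds coming from the structure theorem accompanying Theorem~\ref{main2}. For the lower bound in part (i), take $T_r(n,k,s-1)$ with apex set $W$, $|W|=s-1$, and suppose $F^{(r)+}$ embeds. The expansion vertices of $F^{(r)+}$ are distinct from one another and from the core vertices, so in any embedding each vertex of $W$ is occupied at most once, either as a core vertex or as an expansion vertex of a single expanded edge. For a component $C_i$ with $\chi(C_i)=k+1$, colour each core vertex of $C_i$ lying outside $W$ by the index of its part in $T_r(n-(s-1),k)$; a monochromatic edge $xy$ must then have its expanded hyperedge meet $W$, since a hyperedge disjoint from $W$ is transversal and forces distinct parts. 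If no core vertex of $C_i$ lies in $W$, the colouring is a full $k$-colouring of $C_i$ and hence has a monochromatic edge because $C_i$ is not $k$-colourable. Either way $C_i$ consumes at least one vertex of $W$, and as the core and expansion vertices of distinct components are disjoint, the $s$ components of chromatic number $k+1$ consume $s$ distinct vertices of $W$ — impossible. Hence $T_r(n,k,s-1)$ is $F^{(r)+}$-free and $\ex_r(n,F^{(r)+})\ge |E(T_r(n,k,s-1))|$.

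For the lower bound in part (ii), I would argue that $\cH'(m)$ is $B_{k+1,1}^{(r)+}$-free: realizing a $(k+1)$-st colour in $\cH'(m)$ forces a clique to be routed through $u$, and such a clique must use exactly one vertex of $V_1$, which the incidence rules force to be $v$, the only vertex of $V_1$ joined to $u$ by a hyperedge. Since $B_{k+1,1}$ is two copies of $K_{k+1}$ meeting in a single vertex, both copies would have to use $u$ as their extra colour (so the shared vertex is $u$) and each would separately demand $v$ as its $V_1$-partner; as the two copies are otherwise vertex-disjoint this is impossible. Optimizing the part size $m$ then gives $\ex_r(n,B_{k+1,1}^{(r)+})\ge|E(\cH_r(n,k))|$.

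For the upper bounds I would feed the extremal $r$-graph $\cG$ into the structure theorem. Combined with Theorem~\ref{main}, which gives $|E(\cG)|=t_r(n,k)+O(n^{r-1})$, this should yield a partition of $V(\cG)$ into $k$ parts together with a bounded exceptional set $W$ such that, away from $W$, essentially all hyperedges are transversal and the induced transversal structure is near-complete $k$-partite; the crux is to bound the contribution of $W$. For part (i) the mechanism is a Simonovits-type embedding: the low-chromatic components and the $k$-colourable graphs $C_i-e_i$ embed greedily into the dense transversal part, and each colour-critical edge $e_i$ is completed through a fresh apex vertex of $W$, so $|W|\ge s$ would produce a copy of $F^{(r)+}$; thus $|W|\le s-1$, after which a convexity estimate shows the edge count is maximized exactly by $T_r(n,k,s-1)$. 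For part (ii), the absence of a colour-critical edge is what shifts the extremal configuration: a single fully connected apex already yields two copies of $K_{k+1}^{(r)+}$ sharing it, hence $B_{k+1,1}^{(r)+}$, so no vertex of $W$ may be fully connected, and the densest admissible local gadget is precisely the $u,v$-gadget of $\cH'(m)$; establishing this characterization and optimizing $m$ gives the matching bound.

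The main obstacle is the upper bound, on two fronts. First, extracting from stability a clean enough partition-plus-exceptional-set description to control the $\Theta(n^{r-1})$ term \emph{exactly}: the precision needed is finer than Theorem~\ref{main} alone provides, so the structure theorem must be pushed to pin down the behaviour of the $O(1)$ exceptional vertices and to guarantee the transversal part is complete enough for the greedy embeddings to succeed. Second, the embedding arguments must keep all expanded vertices distinct while reserving one fresh apex per critical edge in part (i), or while demonstrating that two cliques vertex-disjoint off a single vertex cannot both be routed through a constrained apex in part (ii); the bookkeeping for (ii), where the extremal gadget is genuinely denser than any apex-free configuration yet strictly sparser than a full apex, is the most delicate point, and it is there that the finite optimization over $m$ must be carried out with care.
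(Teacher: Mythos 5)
Your lower bounds are essentially fine (and are the part the paper dismisses as obvious): the colouring argument showing $T_r(n,k,s-1)$ is $F^{(r)+}$-free is correct, and for $\cH'(m)$ the right statement to prove is that every hyperedge outside the complete $k$-partite part contains the fixed vertex $u$, so that an embedding of $F^{(r)+}$ can use at most one such hyperedge ``for free'', forcing a colour-critical edge. Be aware that your intermediate claim that a $K_{k+1}$ in $\cH'(m)$ ``must use exactly one vertex of $V_1$'' is not forced: hyperedges through $\{u,v\}$, or through $u$ and $r-1$ vertices outside $V_1$, may contain two vertices of a part $V_j$ with $j\neq 1$, so the argument has to run through the common vertex $u$ of all additional hyperedges rather than through $V_1$.

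The genuine gap is in the upper bound, precisely at the step you flag as the obstacle. You propose to read the apex set off the bounded exceptional set $W$ of the structure theorem and to bound $|W|\le s-1$ by completing one colour-critical edge per apex. But the exceptional set $B$ produced by Theorem \ref{structu} has size bounded only by $rK(\sigma(F)-1)$, which has nothing to do with $s-1$, and its vertices are merely vertices with $\Omega(n)$ neighbours inside their own part --- they are not a cover of the non-transversal hyperedges, and a vertex of $B$ does not by itself ``complete a colour-critical edge''. The paper's mechanism is different: one first shows, via Proposition \ref{trivi} (the $t$-fat-edge device that lifts shadow-graph embeddings to genuine vertex-disjoint expansions --- a device your sketch also lacks), that $s$ \emph{pairwise disjoint} hyperedges, each containing two vertices of a common part not both in $B$, would yield a copy of $F^{(r)+}$. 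The apex set is then extracted by a covering theorem for hypergraph matchings: since the non-transversal hyperedges avoiding $B$ have matching number at most $s-1-|B|$, the Bollob\'as--Daykin--Erd\H os theorem gives either a cover $S$ with $|B\cup S|\le s-1$ (whence $\cG$ sits inside an apex construction and a balancing computation finishes) or only $O(n^{r-2})$ such hyperedges (which is then strictly beaten by adding a fresh apex, contradicting extremality). For (ii) the same dichotomy is applied twice via Hilton--Milner: once to produce $u$ covering all non-transversal hyperedges, and once more to the link of $u$ inside $V_1$ to produce $v$; this, not a direct ``densest admissible gadget'' optimization, is what isolates the $u,v$-gadget of $\cH'(m)$. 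Without the matching-number/cover dichotomy your plan cannot pass from ``$O(1)$ exceptional vertices'' to ``at most $s-1$ apexes'', so the exact values do not follow.
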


Let us discuss what the optimal value of $m$ is.
For given $k$, $r$ and $n$ it is straightforward to determine this value. In general, it is easy to see that if $m>n/k$, then moving a vertex different from $u$ and $v$ from $V_1$ to another part increases the number of hyperedges. Assume now that we move some vertices from one part $V_2$ of the Tur\'an hypergraph to the other parts in a balanced way, one by one. Moving the $i$th vertex $v_i$ to $V_j$, the number of pairs $(w,w')$ with $w\in V_1$, $w'\in V_j$ decreases roughly by $i$, which results in losing roughly $i\binom{k-2}{r-2}\left(\frac{n-m}{k-1}\right)^{r-2}$ hyperedges that intersect each part. On the other hand, we gain $\binom{n-m+i-1}{r-2}$ new hyperedges that contain $u$ and $r-1$ vertices from parts other than $V_1$ (each hyperedge that contains $u$, $v_i$ and $r-2$ vertices from parts other than $V_1$). Therefore, the number of hyperedges increases if $i$ is at most roughly $(k-1)^{r-2}/(k-2)(k-3)\dots (k-r+1)$. 

Let us consider the case $r=3$. Here we are going to be more precise. If $V_1$ has order less than $V_j$, then the number of pairs $(w,w')$ with $w\in V_1$, $w'\in V_j$ decreases by at least two, and this implies that such change does not increase the total number of hyperedges. On the other hand, if $|V_1|=|V_j|$, then it is worth moving a vertex from $V_1$ to $V_j$. 
Therefore, $m=\lfloor (n-1)/(k-1)\rfloor$.


\section{The structure of $F^{(r)+}$-free hypergraphs}



Given an $r$-graph $\cH$, we denote by $\partial\cH$ the \textit{shadow} of $\cH$, which is the $(r-1)$-graph such that an $(r-1)$-set $H$ is a hyperedge of $\partial\cH$ if and only if $H$ is contained in a hyperedge of $\cH$. We say that a set of vertices $A$ is \textit{$t$-heavy} (with respect to $\cH$) if there are at least $t$ hyperedges of $\cH$ that contain $A$. We say that $A$ is \textit{$t$-fat} (with respect to $\cH$) if there are at least $t$ hyperedges of $\cH$ with pairwise intersection equal to $A$. 
Observe that an $(r-1)$-set is $t$-heavy if and only if it is $t$-fat.

Given an $r$-graph $\cH$ and an integer $t$, we let $\partial^*(t)\cH$ denote the $(r-1)$-graph consisting of the $t$-heavy $(r-1)$-sets.

\begin{prop}\label{arny}
Let $\cH$ be an $F^{(r)+}$-free $r$-graph and $t\ge (r-2)|E(F)|+|V(F)|$. Then $\partial^*(t)\cH$ is $F^{(r-1)+}$-free.
\end{prop}

\begin{proof}
Let us assume that there is a copy of $F^{(r-1)+}$ in $\partial^*(t)\cH$ and let $H_1,\dots,H_{|E(F)|}$ be its hyperedges. We go through them greedily and for each $H_i$, we take a hyperedge $H_i'$ of $\cH$ that contains $H_i$. This means we add one more vertex to $H_i$, and we have at least $(r-2)|E(F)|+|V(F)|$ choices. We pick $H_i'$ in such a way that the new vertex is not in the copy of $F^{(r-1)+}$, nor in any $H_j'$ with $j'<i$. As there are less than $(r-2)|E(F)|+|V(F)|$ such vertices, we can always pick $H_i'$ this way. Then the hyperedges $H_i'$ for $i\le |E(F)|$ form a copy of $F^{(r)+}$ in $\cH$, a contradiction.
\end{proof}

   Let $\cG$ be an $n$-vertex $F^{(r)+}$-free $r$-graph and $t= (r-2)|E(F)|+|V(F)|$. We will apply the operator $\partial^*(t)$ repeatedly, and let $\cG_i$ denote the $i$-graph obtained this way for every $i<r$.Then $\cG_i$ is $F^{(i)+}$-free by Proposition \ref{arny}.

Now we are ready to prove Theorem \ref{main2} that we restate here for convenience.

\begin{thm*} For any graph $F$ and integers $r\ge p\ge 2$, we have $\ex_r(n,F^{(r)+})=\ex_p(n,K_r^p,F^{(p)+})+O(n^{r-1})$.
\end{thm*}

\begin{proof}
 We partition $\cG$ into two hypergraphs. Let $\cG'$ denote the hypergraph consisting of the hyperedges $H$ such that every $p$-subset of $H$ is in $\cG_p$, and $\cG''$ denote the remaining hyperedges. Then clearly $|E(\cG')|\le \cN(K_r^p,\cG_p)\le\ex(n,K_r^p,F^{(p)+})$, since each hyperedge is on the vertex set of a $K_r^p$ in $\cG_p$.

    We claim that $|E(\cG'')|=O(n^{r-1})$. By definition of $\partial^*(t)$, for each $r$-edge $H$ in $\cG''$, there is a largest integer $p\le q< r$ such that a $q$-subset $A$ of $H$ is not a hyperedge of $\cG_q$. For any fixed $q$ between $p$ and $r$, there are $O(n^q)$ such $q$-sets. Each such $q$-set is contained in at most $t-1$ hyperedges of $\cG_{q+1}$, which are clearly contained in $O(n^{r-q-1})$ $r$-sets. We counted each hyperedge of $\cG''$ at least once this way, completing the proof.
\end{proof}

Let us focus on $\cG_2$ now. Observe that if $\cG$ has $\ex_r(n,F^{(r)+})-o(n^{r})$ hyperedges, then $\cG_2$ has to contain $\ex(n,K_r,F)-o(n^r)$ copies of $K_r$. Using this, one can show that a stability result of Ma and Qiu \cite{mq} on $\ex(n,K_r,F)$ in the non-degenerate setting is equivalent to a stability result of Tan, Xu and Yan \cite{txy} on $\ex_r(n,F^{(r)+})$.

Ma and Qiu \cite{mq} gave a description of the extremal graphs in the non-degenerate setting, inside a proof. See \cite{ge2} for a more general version that is stated as a separate theorem. It is not hard to see that the proof in \cite{ge2} extends with small modifications to graphs that are close to extremal. We only state it for $K_r$, but the same holds if $K_r$ is replaced by any $H$ with the so-called $F$-Tur\'an-stable property. Given a graph $F$ with $\chi(F)=k+1$, we denote by $\sigma(F)$ the smallest number of vertices that we can delete from $F$ to obtain a graph of chromatic number $k$.

\begin{thm}\label{grafos} Let $\chi(F)=k+1>r$, $\zeta>0$, $n$ sufficiently large and $G$ be an $n$-vertex $F$-free graph G with $\cN(K_r,G) \ge \ex(n,K_r, F)-\zeta n^{r-1}$. Then there is a $k$-partition of $V(G)$ to $V_1,\dots,V_k$, a constant $K=K(F,\zeta)$, a set $B$ of at most
$rK(\sigma(F)-1)$ vertices and a set $U$ of at most $K$ vertices such that each member of $\cD(F)$ inside a part shares at least two
vertices with $B$, every vertex of $B$ is adjacent to 
$\Omega(n)$ vertices in each part, every vertex of $U\cap V_i$ is adjacent to $o(n)$ vertices in $V_i$ and every vertex
of $V_i\setminus (B\cup U)$ is adjacent to $o(n)$ vertices in $V_i$ and all but $o(n)$ vertices in $V_j$ with $j\neq i$. Moreover, if $\zeta$ is sufficiently small, then $U=\emptyset$ for sufficiently large $n$.
\end{thm}

The proof is almost identical to the version in \cite{ge2} (which borrows a lot from \cite{mq}), thus we only give a sketch.

\begin{proof}[Sketch of proof] Let us pick numbers $\alpha,\beta,\zeta,\gamma,\varepsilon>$ in this order, such that each is
sufficiently small compared to the previous one, and after that we pick $n$ that is sufficiently
large. By the stability theorem of Ma and Qiu \cite{mq}, $G$ is close to the Tur\'an graph. More precisely, we can partition the vertex set into $V_1,\dots,V_k$ such that at most $\varepsilon n^2$ edges are missing between parts and $|V_i|\ge \alpha n$ for each $i$.
Let $B_i$
denote the set of vertices in $V_i$ with at least $\gamma n$ neighbors in $V_i$ and $B:=\cup_{i=1}^k B_i$.

The proof of the upper bound on $|B|$ in \cite{ge2} (and in \cite{mq}) does not use the assumption that $\cN(K_r,G)=\ex(n,K_r,F)$, thus it holds in our setting. Let $U_i$ denote the set of vertices in $V_i$ with at least $\beta n$ non-neighbors in some other part $V_j$ and $U:=\cup_{i=1}^k U_i$. In the case $G$ is extremal, it was shown in \cite{mq} that $U$ is empty, by showing that we can change $G$ to another $F$-free graph $G'$ with $\cN(K_r,G')\ge\cN(K_r,G)+|U|\beta n^{r-1}/2^r)$. For us, it just shows that $|U|$ is bounded above by a constant $K$, since $\cN(K_r,G')\le \ex(n,K_r,G)\le \cN(K_r,G)+\zeta n^{r-1}$. If $\zeta$ is sufficiently small compared to $\beta$, then this implies $|U|=0$.

Finally, the assumption on $\cD(F)$ follows as in \cite{ge2}.
\end{proof}

The above theorem together with Theorem \ref{main} implies that the same conclusion holds for $\cG_2$ if $\cG$ is an extremal hypergraph for $\ex_r(n,F^{(r)+})$. We will use $t$-fat edges to get rid of $U$.

\begin{proposition}\label{trivi}
Assume that $t\ge (r-2)|E(F)|+|V(F)|$ and let $F_0$ be a subgraph of $F$.
Assume that we find a copy of $F_0^{(r)+}$ in $\cF$, such that its core $F_0$ is extended to a copy of $F$ with $t$-fat edges in the shadow graph of $\cG$ insuch a way that the only vertices shared by $F_0^{(r)+}$ and $F$ are those in $F_0$. Then this copy is the core of an $F^{(r)+}$ in $\cG$.
\end{proposition}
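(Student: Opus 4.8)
The plan is to realize the edges of $F$ that lie outside $F_0$ one at a time by a greedy argument, exactly in the spirit of the proof of Proposition~\ref{arny}: for each such edge I will choose a hyperedge of $\cG$ supplying the required $r-2$ expansion vertices, while avoiding every vertex already committed to the structure.

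First I would fix notation. Let $W$ be the vertex set of the given copy of $F_0^{(r)+}$ in $\cG$, consisting of the core vertices (the image of $V(F_0)$) together with its $(r-2)|E(F_0)|$ expansion vertices, and let $V$ be the vertex set of the copy of $F$ in the shadow graph, i.e.\ the image of $V(F)$. The hypothesis that the only vertices shared by $F_0^{(r)+}$ and $F$ are those of $F_0$ says precisely that the expansion vertices of $F_0^{(r)+}$ are disjoint from $V$, so the hyperedges already present witness the edges of $F_0$ with private expansion vertices lying outside $V$. It then remains to produce, for each edge $e\in E(F)\setminus E(F_0)$, a hyperedge $H_e$ of $\cG$ containing the two endpoints of $e$ together with $r-2$ further vertices, all chosen to be new.

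Next I would run the greedy selection. Enumerate $E(F)\setminus E(F_0)$ as $e_1,\dots,e_m$ and maintain a set $S$ of forbidden vertices, initially $V\cup W$. When processing $e_j=\{a_j,b_j\}$, I use that $e_j$ is $t$-fat with respect to $\cG$: there are $t$ hyperedges through $\{a_j,b_j\}$ whose pairwise intersection is exactly $\{a_j,b_j\}$, so the $(r-2)$-sets obtained by deleting $a_j$ and $b_j$ from them are pairwise disjoint. Consequently each vertex of $S\setminus\{a_j,b_j\}$ lies in at most one such $(r-2)$-set, so at most $|S|-2$ of the $t$ hyperedges meet $S$ in a vertex other than $a_j,b_j$. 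A direct count gives $|S|=|V(F)|+(r-2)\bigl(|E(F_0)|+j-1\bigr)\le |V(F)|+(r-2)(|E(F)|-1)<t$, so at least one hyperedge has the vertices outside $\{a_j,b_j\}$ disjoint from $S$; I take it to be $H_{e_j}$ and add its $r-2$ new vertices to $S$. After all $m$ steps, the $F_0^{(r)+}$-hyperedges together with the $H_{e_j}$ form a copy of $F^{(r)+}$ whose core is the given copy of $F$: every core vertex lies in $V$, every edge has its own hyperedge, and all expansion vertices are distinct because $S$ only ever grows by disjoint fresh $(r-2)$-sets.

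The only real subtlety — and the reason the statement is phrased with $t$-fat rather than $t$-heavy edges — is the blocking estimate in the greedy step. For a pair $\{a_j,b_j\}$ that is merely $t$-heavy, the $t$ hyperedges through it could all share a common further vertex when $r>3$, and a single forbidden vertex would then block all of them; $t$-fatness is exactly what guarantees pairwise disjoint extensions, so that each forbidden vertex costs at most one option and the number of blocked hyperedges stays below $t$. Everything else is the same bookkeeping as in Proposition~\ref{arny}, and the hypothesis $t\ge (r-2)|E(F)|+|V(F)|$ is precisely what makes the count close at every step.
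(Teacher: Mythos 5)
Your proof is correct and follows essentially the same route as the paper: a greedy selection of one hyperedge per edge of $E(F)\setminus E(F_0)$, using the fact that the $t$ hyperedges witnessing $t$-fatness have pairwise disjoint $(r-2)$-sets beyond the edge itself, so each already-committed vertex blocks at most one of them, while the forbidden set always stays below $t$. Your explicit count $|S|\le |V(F)|+(r-2)(|E(F)|-1)<t$ and the remark on why $t$-fatness (rather than $t$-heaviness) is needed are exactly the content of the paper's argument, just spelled out in more detail.
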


\begin{proof}
    Consider the  copy of $F$ in the shadow graph containing the core of an $F_0^{(r)+}$. We go through the edges not in the core of $F_0^{(r)+}$ one by one. For each edge $uv$, we pick a hyperedge containing it and avoiding the other vertices in $F_0^{(r)+}$, in $F$ and in the already picked hyperedges. This is doable since there are less than $t$ such vertices, and each such vertex intersects at most one of the $t$ hyperedges given by the $t$-fatness of $uv$.
\end{proof}

Let $G=G(\cG)$ denote the graph consisting of the $t$-fat edges with $t= (r-2)|E(F)|+|V(F)|$.

\begin{corollary}
    If $\cG$ is $F^{(r)+}$-free, then $G(\cG)$ is $F$-free.
\end{corollary}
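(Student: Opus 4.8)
The plan is to prove this by contradiction, leveraging Proposition \ref{trivi} with the trivial choice $F_0 = \emptyset$ (the empty subgraph). Suppose $G(\cG)$ contains a copy of $F$. Since $G(\cG)$ consists entirely of $t$-fat edges (with $t = (r-2)|E(F)| + |V(F)|$), every edge of this copy of $F$ is $t$-fat in $\cG$. The goal is then to realize an actual copy of $F^{(r)+}$ inside $\cG$, which contradicts the hypothesis that $\cG$ is $F^{(r)+}$-free.

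First I would identify the empty graph $F_0$ as a legitimate subgraph of $F$: its expansion $F_0^{(r)+}$ is empty, so a "copy" of $F_0^{(r)+}$ in $\cG$ is vacuously present and shares no vertices with anything. With $F_0 = \emptyset$, the core is empty, and so the condition in Proposition \ref{trivi} that "the only vertices shared by $F_0^{(r)+}$ and $F$ are those in $F_0$" is automatically satisfied. Then the entire copy of $F$ in $G(\cG)$ must be built up from $t$-fat edges, which is exactly the setup of the proposition. Applying Proposition \ref{trivi} directly produces an $F^{(r)+}$ in $\cG$.

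The mechanism underneath is worth keeping in mind even though Proposition \ref{trivi} packages it: we process the $|E(F)|$ edges of the copy of $F$ one at a time, and for each edge $uv$ we select a hyperedge of $\cG$ containing $uv$ whose $r-2$ expansion vertices avoid $V(F)$ and all previously chosen expansion vertices. The $t$-fatness guarantees at least $t$ hyperedges on $uv$ with pairwise intersection exactly $\{u,v\}$, and since fewer than $t$ forbidden vertices can be present, and each forbidden vertex can lie in at most one of these pairwise-disjoint-outside-$\{u,v\}$ hyperedges, a valid choice always exists. This disjointness is precisely why \emph{fatness} rather than mere \emph{heaviness} is needed.

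The proof is short enough that I do not anticipate a genuine obstacle; the only subtlety is recognizing that the empty graph is the correct choice of $F_0$ so that Proposition \ref{trivi} applies with no core to handle separately. The formal statement would read as follows.

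\begin{proof}
Apply Proposition \ref{trivi} with $F_0$ equal to the empty graph. Suppose for contradiction that $G(\cG)$ contains a copy of $F$. Since every edge of $G(\cG)$ is $t$-fat with $t=(r-2)|E(F)|+|V(F)|$, this copy of $F$ is built entirely from $t$-fat edges of the shadow graph of $\cG$. As $F_0^{(r)+}$ is empty, the trivial (empty) copy of $F_0^{(r)+}$ shares no vertices with this copy of $F$, so the hypotheses of Proposition \ref{trivi} are met. The proposition then yields a copy of $F^{(r)+}$ in $\cG$, contradicting the assumption that $\cG$ is $F^{(r)+}$-free.
\end{proof}
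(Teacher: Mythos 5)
Your proof is correct and matches the paper's intended argument: the corollary is stated in the paper without an explicit proof precisely because it is the special case of Proposition \ref{trivi} with $F_0$ the empty graph, which is exactly the reduction you make. Your aside on why $t$-fatness (pairwise disjointness outside $\{u,v\}$) rather than mere $t$-heaviness is needed for the greedy selection is also the right reading of the paper's mechanism.
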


Note that the above corollary also implies Theorem \ref{main}. Indeed, there are at most $\ex(n,K_r,G)$ hyperedges with each subedge being $t$-fat. The other hyperedges each contain a subedge that is not $t$-fat. Given such an edge $uv$ in the shadow graph, deleting $u$ and $v$ from the hyperedges containing both of them, we obtain an $(r-2)$-uniform hypergraph without a matching of size $t$. By a theorem of Erd\H os, \cite{erdo}, there are $O(n^{r-1})$ hyperedges for each of the $O(n^2)$ non-$t$-fat edges.

We also remark that essentially the same proof as in the above corollary gives that for any $i\le r$, the hypergraph having the $t$-fat $i$-sets as hyperedges is $F^{(i)+}$-free. 

\begin{thm}\label{structu}
Let $\chi(F)=k+1>r$ and $\cG$ be an $n$-vertex $F^{(r)+}$-free $r$-graph with $|E(\cG)|=\ex_r(n,F^{(r)+})$. Let $G=G(\cG)$. Then there is a $k$-partition of $V(G)$ to $V_1,\dots,V_k$, a constant $K=K(F)$ and a set $B$ of at most
$rK(\sigma(F)-1)$ vertices such that each member of $\cD(F)$ in $G$ inside a part shares at least two
vertices with $B$, every vertex of $B$ is adjacent to 
$\Omega(n)$ vertices in each part
and every vertex
of $V_i\setminus B$ is adjacent to $o(n)$ vertices in $V_i$ and all but $o(n)$ vertices in $V_j$ with $j\neq i$. 
\end{thm}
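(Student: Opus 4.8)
The plan is to reduce Theorem~\ref{structu} to the already-established graph structure theorem (Theorem~\ref{grafos}) applied to $G=G(\cG)$. First I would observe that by Theorem~\ref{main} (equivalently, the corollary preceding this statement), since $\cG$ is extremal for $\ex_r(n,F^{(r)+})$, the number of hyperedges with every subedge $t$-fat must be $\ex(n,K_r,G)-o(n^r)$ up to the $O(n^{r-1})$ error; more precisely, $\cN(K_r,G)$ must be at least $\ex(n,K_r,F)-\zeta n^{r-1}$ for a suitable $\zeta$. The argument for this lower bound on $\cN(K_r,G)$ is the crux: I would run the decomposition from the proof of Theorem~\ref{main2}, splitting $\cG$ into hyperedges all of whose $2$-subsets are $t$-fat (counted by $\cN(K_r,G)$ by the preceding corollary, since $G$ is $F$-free) and the rest (bounded by $O(n^{r-1})$ via the Erd\H{o}s matching theorem as in the remark). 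Since $|E(\cG)|=\ex_r(n,F^{(r)+})\ge \ex(n,K_r,F)+\Omega(1)$ and the non-$t$-fat part contributes only $O(n^{r-1})$, the $t$-fat part forces $\cN(K_r,G)\ge \ex(n,K_r,F)-O(n^{r-1})$.

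Once this near-optimality of $\cN(K_r,G)$ is in hand, I would apply Theorem~\ref{grafos} to $G$ with an appropriate choice of $\zeta$. This directly yields the $k$-partition $V_1,\dots,V_k$, the bounded set $B$ of size at most $rK(\sigma(F)-1)$ with the stated adjacency and decomposition-family properties, and the set $U$. The key gain over Theorem~\ref{grafos} is that the present statement omits $U$ entirely. To eliminate $U$, I would invoke the last sentence of Theorem~\ref{grafos}: for $\zeta$ sufficiently small and $n$ large, $U=\emptyset$. The point is that the extremality of $\cG$ lets me take $\zeta$ as small as I like, because the deviation $\cN(K_r,G)\ge \ex(n,K_r,F)-\zeta n^{r-1}$ can be guaranteed for any prescribed $\zeta>0$ once $n$ is large, since the error term from the non-$t$-fat hyperedges is genuinely $O(n^{r-1})$ with an absolute implied constant not growing with $n$.

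There is one subtlety I would need to address carefully: Theorem~\ref{grafos} makes statements about adjacencies in $G$ itself, whereas the conclusion here refers both to $G$ (for the partition and the $B$-adjacency and the $V_i\setminus B$ adjacency) and to $\cG_2$ (for the decomposition-family condition, phrased as ``each member of $\cD(F)$ in $\cG_2$ inside a part shares at least two vertices with $B$''). I would need to relate $G$ and $\cG_2$: every $t$-fat $2$-set is in particular $t$-heavy, hence lies in $\Delta^*(t)\cG=\cG_2$, so $E(G)\subseteq E(\cG_2)$, giving one containment for free. For the decomposition-family condition stated over $\cG_2$, I would argue that Proposition~\ref{trivi} (with $F_0$ ranging over the relevant subgraphs) forbids copies of members of $\cD(F)$ inside a part that avoid $B$, since otherwise such a copy together with the near-complete bipartite structure between parts would build a copy of $F$ in the $t$-fat graph $G$, and hence an $F^{(r)+}$ in $\cG$ by Proposition~\ref{trivi}. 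This is exactly the mechanism already used to derive the $\cD(F)$ condition in Theorem~\ref{grafos}, now transported to $\cG_2$ via $t$-fatness.

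The main obstacle I anticipate is not any single deep estimate but the bookkeeping of the parameter hierarchy: I must ensure that the single $\zeta$ chosen to make $U=\emptyset$ in Theorem~\ref{grafos} is compatible with the $o(n)$ and $\Omega(n)$ bounds inherited from that theorem, and that the constant $K=K(F)$ can be taken independent of $\zeta$ once $\zeta$ is fixed small. Since the proof of Theorem~\ref{grafos} already delivers $U=\emptyset$ for small $\zeta$ and bounded $B$, and since extremality supplies an arbitrarily small $\zeta$, I expect the chain to close cleanly, with the only real work being the verification that the $t$-fat graph $G$ inherits exactly the hypotheses of Theorem~\ref{grafos}.
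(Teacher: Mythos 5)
Your reduction to Theorem \ref{grafos} is the right first move and matches the paper: extremality of $\cG$ plus the $O(n^{r-1})$ bound on hyperedges that are not cliques of $G$ gives $\cN(K_r,G)\ge \ex(n,K_r,F)-\zeta n^{r-1}$, and transferring the $\cD(F)$ condition to $\cG_2$ via $t$-fatness and Proposition \ref{trivi} is also how the paper argues. The gap is in your elimination of $U$. You claim that the deviation $\cN(K_r,G)\ge \ex(n,K_r,F)-\zeta n^{r-1}$ ``can be guaranteed for any prescribed $\zeta>0$'' because the error term is $O(n^{r-1})$. That is not what $O(n^{r-1})$ gives you: the counting argument only yields $\cN(K_r,G)\ge \ex(n,K_r,F)-Cn^{r-1}$ for one fixed constant $C=C(F,r)$ coming from the Erd\H os matching bound, and $C$ does not shrink as $n$ grows. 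The ``$U=\emptyset$'' clause of Theorem \ref{grafos} requires $\zeta$ small compared to the parameter $\beta$ in its proof, and nothing forces $C<\beta/2^r$. Indeed the slack is genuinely of order $n^{r-1}$ in general (compare Proposition \ref{corre}: $\ex_r(n,F^{(r)+})$ can exceed $\ex(n,K_r,F)$ by $\Theta(n^{r-1})$), so $G$ need not be within $o(n^{r-1})$ of $K_r$-extremal and the graph-level theorem alone cannot rule out $U\neq\emptyset$.

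The paper therefore removes $U$ by a separate exchange argument that uses the extremality of $\cG$ as a hypergraph rather than of $G$ as a graph: for $u\in U\cap V_1$, take $t$ vertices of $V_1\setminus(B\cup U)$ whose common neighborhood $X$ covers all but $o(n)$ of $V(G)\setminus V_1$, delete all hyperedges containing $u$ together with another vertex of $V_1$ or a vertex outside $X$ (shown to be $o(n^{r-1})$ many, using the definition of $U$ and the Erd\H os matching theorem), and add all transversal hyperedges through $u$ inside $X$, of which $\Omega(n^{r-1})$ are new because $u$ has $\Omega(n)$ non-neighbors in some $V_j$. The $F^{(r)+}$-freeness of the modified hypergraph is then verified with Proposition \ref{trivi} and $t$-fatness, contradicting $|E(\cG)|=\ex_r(n,F^{(r)+})$. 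Some exchange of this kind at the hypergraph level is unavoidable, and your proposal is missing it.
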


\begin{proof}
    Since $G$ contains $\cG_2$, there are $O(n^{r-1})$ hyperedges that are not copies of $K_r$ in $G$, using Theorem \ref{main}. Therefore, we are done unless $\cN(K_r,G)\ge \ex(n,K_r,F)-\zeta n^{r-1}$ for some $\zeta>0$, hence we can apply Theorem \ref{grafos}. Assume that there is a $u\in U$. Without loss of generality, $u\in V_1$, $u$ has $o(n)$ neighbors in $V_1$ and $\Omega(n)$ non-neighbors in $V_2$. Let us pick a set $S$ of $t$ vertices in $V_1\setminus (B\cup U)$. Then the common neighborhood $X$ of vertices in $S$ contains all but $o(n)$ vertices of $V(G)\setminus V_1$. Now we delete from $\cG$ all the hyperedges that contain $u$ and another vertex from $V_1$ or from $V(G)\setminus X$, and add all the hyperedges that contain $u$ and at most one vertex from each $V_i\cap X$, $i\ge 2$. 

    We will show that we added $\Omega(n^{r-1})$ hyperedges, deleted $o(n^{r-1})$ hyperedges and the resulting hypergraph $\cG'$ is $F^{(r)+}$-free, clearly contradicting the extremality of $\cG$.

    If $\cG'$ contains a copy of $F^{(r)+}$, let 
    $xy$ be an edge of the core $F$ such that the corresponding hyperedge of $F^{(r)+}$ contains $u$. Then this hyperedge does not meet $V_1\setminus \{u\}$, nor $V(G)\setminus X\setminus \{u\}$. Observe that if $x$ and $y$ are in different parts $V_i$, then they both must be in $X\setminus V_1$, thus $xy$ is a $t$-fat edge. If $x$ and $y$ are both in $V_i$, then the hyperedge was already in $\cG$. Therefore, these hyperedges and the hyperedges not containing $u$ are in $\cG$, and we can extend them to a copy of $F^{(r)+}$ in $\cG$ using Proposition \ref{trivi}, a contradiction.

    Consider the hyperedges in $\cG'$ that contain $u$, one of the $\Omega(n)$ vertices in $V_2$ that are not adjacent to $u$ in $G$, and at most one vertex from each other part. Clearly, there are $\Omega(n^{r-1})$ such hyperedges and each of them is new.

    Finally, consider the deleted hyperedges. Clearly there are $o(n^{r-1})$ hyperedges that contain $u$ and a vertex from $V(G)\setminus X\setminus V_1$, thus we can focus on those that contain $u$ and another vertex from $V_1$. Assume that $v\in V_1$ is in at least $\varepsilon n^{r-2}$ hyperedges together with $u$. Then deleting $u$ and $v$ from those hyperedges, we obtain an $(r-2)$-graph with at least $\varepsilon n^{r-2}$ hyperedges. This contains a matching of size $t$ by a theorem of Erd\H os \cite{erdo}, thus $uv$ is $t$-fat, hence there are at most $\varepsilon n$ such vertices $v$ by the definition of $U$. Those vertices are in at most $n^{r-2}$ hyperedges together with $u$. Therefore, altogether there are at most $2\varepsilon n^{r-1}$ hyperedges containing $u$ and another vertex from $V_1$, completing the proof.
\end{proof}

\section{Proofs}

Let us continue with Proposition \ref{corre}. The upper bound follows from Theorem \ref{main}. The lower bound is provided by 
$T_r'(n,k)$. Recall that it is obtained from $T_r(n,k)$ by taking $u,v\in V_1$ and adding as hyperedges each $r$-set that contains both $u$ and $v$, and each $r$-set that contains $v$ and $r-1$ vertices from parts other than $V_1$. The lower bound in Proposition \ref{corre} follows from the next proposition.

\begin{proposition}
    If $F^{(r)+}$ is contained in $\cH_r'(n,k)$, then either $F$ has chromatic number at most $k$, or $F$ has a color-critical edge.
\end{proposition}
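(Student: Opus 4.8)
The plan is to analyze an arbitrary embedding of $F^{(r)+}$ into $\cH_r'(n,k)=\cH'(m)$ and to read off from it a coloring of $F$ that is proper except on at most one edge. Recall that the hyperedges of $\cH'(m)$ come in three kinds: \emph{transversal} edges, meeting each of $V_1,\dots,V_k$ in at most one vertex; \emph{type-(b)} edges, containing both $u$ and $v$; and \emph{type-(c)} edges, containing $u$ together with $r-1$ vertices avoiding $V_1$. Fix an injective embedding $\phi\colon F^{(r)+}\to\cH'(m)$ and color each core vertex $x\in V(F)$ by the index $c(x)=i$ of the part $V_i$ with $\phi(x)\in V_i$. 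If $c$ is a proper coloring of $F$, then $\chi(F)\le k$ and we are done; so the entire content is to bound the \emph{monochromatic} edges of $F$, i.e.\ the edges $xy$ with $c(x)=c(y)$.

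The key lemma I would isolate is that there is \emph{at most one} monochromatic edge. Indeed, if $xy$ is monochromatic, then its hyperedge $H_{xy}$ in $F^{(r)+}$ (the endpoints $x,y$ together with the $r-2$ expansion vertices of $xy$) has two vertices in a common part, so it cannot be transversal; hence $H_{xy}$ is of type (b) or (c), and in either case its image contains $u$. Writing $p=\phi^{-1}(u)$ and $q=\phi^{-1}(v)$ — these are single vertices because $\phi$ is injective — every monochromatic edge's hyperedge therefore contains $p$. I then split on the nature of $p$ inside $F^{(r)+}$. If $p$ is an expansion vertex, it lies in a unique hyperedge, so all monochromatic edges share that one hyperedge and there is at most one of them.

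The remaining case is $p=z$, a core vertex of $F$ with $\phi(z)=u\in V_1$. Since the only core vertices of $H_{xy}$ are its endpoints, $z\in\{x,y\}$, so every monochromatic edge is incident to $z$; and as $\phi(z)\in V_1$, the other endpoint also maps into $V_1$. Consequently each such hyperedge has two vertices in $V_1$, which excludes type (c) (whose only $V_1$-vertex is $u$); so it is of type (b), and its image contains $v$, i.e.\ it contains $q$. If $q$ is a core vertex it must be the second endpoint, pinning the edge down to $zq$; if $q$ is an expansion vertex it again lies in a single hyperedge. Either way at most one monochromatic edge survives, proving the lemma.

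Finally, with at most one monochromatic edge $e_0$, the coloring $c$ is a proper $k$-coloring of $F-e_0$, so $\chi(F-e_0)\le k$. If $\chi(F)\le k$ we are in the first alternative; otherwise $\chi(F-e_0)\le k<k+1\le\chi(F)$, so deleting $e_0$ strictly lowers the chromatic number and $e_0$ is a color-critical edge. I expect the main obstacle to be the bookkeeping around type-(c) edges: unlike the transversal edges, these need not be rainbow, so a priori they could create a monochromatic edge in a color other than $1$, and one must verify that this is genuinely impossible in the core-vertex case — it is, precisely because a monochromatic edge through $z$ forces a second vertex into $V_1$ and hence type (b). The only other point requiring care is that $p$ and $q$ really are single vertices, which is exactly where injectivity of the embedding is used.
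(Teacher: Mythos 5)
Your proof is correct, and it takes a genuinely different route from the paper's. The paper fixes a copy of $F^{(r)+}$ chosen to use as many hyperedges of the complete $k$-partite part $\cH(m)$ as possible, and then does casework on the number of non-$\cH(m)$ hyperedges: zero gives $\chi(F)\le k$, one gives a color-critical edge, and two or more is ruled out by a replacement argument (if an endpoint $w_i$ of the relevant core edge lands outside $V_1$, the crossing pair is $t$-fat in $\cH(m)$, so the offending hyperedge could be swapped for a transversal one, contradicting the maximal choice; otherwise both hyperedges must be of type (b) and the injectivity contradiction appears). You instead bound the weaker but sufficient quantity --- the number of monochromatic edges of the induced part-coloring --- by analyzing whether $\phi^{-1}(u)$ and $\phi^{-1}(v)$ are core or expansion vertices. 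This buys you a cleaner argument: no extremal choice of embedding, no $t$-fat replacement step (and hence no implicit reliance on the parts being large), and the expansion-vertex subcases are dispatched by the trivial observation that an expansion vertex lies in a unique hyperedge. Both proofs converge on the same punchline: a hyperedge with two vertices in $V_1$ must be of type (b), hence contains both special vertices, and injectivity then pins down a unique bad edge. The only points you should make explicit in a final write-up are the degenerate one --- if $u$ (resp.\ $v$) is not in the image of $\phi$ then the relevant set of monochromatic edges is empty and there is nothing to prove --- and the observation that the three kinds of hyperedges of $\cH'(m)$ need not be disjoint classes (a type-(c) set can be transversal), which your argument already handles correctly since you only use that a non-transversal hyperedge is of type (b) or (c).
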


\begin{proof} Let $\cH_r'(n,k)=\cH'(m)$.
    If $F^{(r)+}$ is in $\cH(m)$, then $F$ has chromatic number at most $k$. If $F^{(r)+}$ contains one additional hyperedge, then clearly $F$ contains an edge whose removal results in a $k$-colorable graph. Therefore, we can assume that two additional hyperedges are used. Both contain $v$, thus $v$ must be in the core of $F^{(r)+}$, with degree at least 2. More precisely, there are two vertices $w_1,w_2$ such that $vw_1$ and $vw_2$ are both in the core $F$ and the hyperedges $H_i$ corresponding to $vw_i$ are not in $\cH(m)$.
    
    Assume without loss of generality that given the core, we picked the hyperedges of $F^{(r)+}$ such that we use as many hyperedges of $\cH(m)$ as possible. If any of $w_1$ and $w_2$ are not in $V_1$, then the edge $vw_i$ is $t$-fat in $\cH(m)$ thus we could replace $H_i$ by a hyperedge of $\cH(m)$, a contradiction. Thus $w_1,w_2\in V_1$, thus $H_1$ and $H_2$ both contain $u$, a contradiction.
\end{proof}

Let us continue with the proof of Theorem \ref{exac} that we restate here for convenience.

\begin{thm*}\textbf{(i)} Let $F$ consist of $s$ components with chromatic number $k+1$, each with
a color-critical edge, and any number of components with chromatic number at most $k$. Then
$\ex_r(n,F^{(r)+})=|E(T_r(n,k,s-1))|$ for sufficiently large $n$.

\textbf{(ii)}
$\ex_r(n,B_{k+1,1}^{(r)+})=|E(\cH_r'(n,k))|$ for sufficiently large $n$.
\end{thm*}

We remark that $\ex(n,K_r,F)$ and $\ex(n,K_r,B_{k+1,1})$ were both determined in \cite{ge2}. We will apply  Theorem \ref{structu} for the extremal graphs, and then adapt the proofs from \cite{ge2} to our setting. We will also use the well-known fact (see, e.g., \cite{bde}) that if an $r$-graph does not contain $s$ pairwise disjoint hyperedges, then either there is a set $S$ of order at most $s-1$ such that each hyperedge intersects $S$, or there are $O(n^{r-2})$ hyperedges in the $r$-graph.

\begin{proof} The lower bounds are obvious in both cases.
To obtain the upper bounds, we apply Lemma \ref{structu} to the extremal $r$-graph $\cG$. 

Let us start with \textbf{(i)}. Assume first that there are $s$ disjoint hyperedges $H_1,\dots,H_s$ in $\cG$ such that each $H_i$ contains two vertices $u_i,v_i$ that belong to the same part $V_j$ and at least one of them is not in $B$. Let $F_1,\dots,F_s$ denote the components of $F$ with chromatic number $k+1$. We go through the edges $u_iv_i$ one by one
and we will extend them to $F_i$. This can be done using Proposition \ref{trivi}. We need to show a core of $F$ that uses only $t$-fat edges and does not use vertices of any $H_i$ except for $u_i$ and $v_i$.

Without loss of generality, $u_i,v_i\in V_1$. Let $A_1,\dots,A_k$ denote the parts
of the $k$-partite graph we obtain by deleting a color-critical edge from $F_i$, with $A_1$ containing the two endpoints of that edge. We will embed the vertices
in $A_j$ to $V_j$. We pick $|A_1|-2$ vertices from $V_1\setminus B$. Let $H=\cup_{i=1}^sH_i$. Recall that $u_i$, $v_i$ and the other vertices of $A_1$ have $\Omega(n)$ common neighbors in $V_2\setminus (B\cup H)$. We pick $|A_2|$ of them that avoid the vertices we already picked to be in the copy of $F$. Then the vertices we picked
to be in the copy of $F_i$ have 
$\Omega(n)$ common neighbors in $V_3\setminus (B\cup H)$, we pick $|A_3|$ of them that we
have not picked to be in our copy of $F$, and so on. We always have to avoid $O(1)$ already
picked vertices. This way we obtain an $F_i$ and ultimately $F_1,\dots,F_s$. Clearly we can pick
the remaining components in a similar way to obtain $F$, a contradiction.

If there are $s-|B|$ disjoint hyperedges in $\cG$ avoiding $B$, then we can find the desired disjoint hyperedges. Indeed, we go through the vertices of $B$ one by one and pick a hyperedge containing it. We pick one of its $\Omega(n)$ neighbors in its own part in $G$ that is not in any of the at 
most $s-1$ hyperedges picked earlier and not in $B$. This is a $t$-fat edge, thus there are $t$ hyperedges of $\cG$ containing it and disjoint otherwise. At least one of them avoids each of the hyperedges picked earlier.


Let $\cG'$ denote the subhypergraph of $\cG$ containing the edges that intersect less than $r$ parts and avoid $B$.
We obtained that the largest matching in $\cG'$ is of size at most $s-1-|B|$. 
By the fact mentioned before the proof, either there is a set $S$ of $s-1-|B|$ vertices such that each hyperedge in $\cG'$ contains at least one of them, or there are $O(n^{r-2})$ hyperedges in $\cG'$.

In the first case, each hyperedge of $\cG$ either intersects $r$ parts, or contains at least one vertex of $B\cup S$. This is a subhypergraph of the following $F^{(r)+}$-free construction: we take a complete $k$-partite $r$-graph, choose $s-1$ vertices and add each hyperedge containing one of those vertices. It is left to show that in this case the best is to take a balanced complete multipartite hypergraph outside those $s-1$ vertices. This holds by the following argument. Assume that, say, $|V_1|>|V_2|+1$. Then we move a vertex from $V_1$ to $V_2$. Each hyperedge intersects $|V_1\cup V_2|$ in at most two vertices. The number of hyperedges with intersection of order less than 2 does not change. The number of hyperedges with intersection of order 2 increases, since it is the number of ways we can pick the other $r-2$ vertices from the other parts (this does not change), times the number of ways to pick the two vertices from $V_1$ and $V_2$, which is the product of $|V_1|$ and $|V_2|$, thus increases.

In the second case, assume first that $|B|<s-1$. Then we can delete $\cG'$, fix a vertex $v$ outside $B$ and add all the hyperedges containing $v$. This way we deleted $O(n^{r-2})$ hyperedges and added $\Omega(n^{r-1})$ hyperedges. The resulting hypergraph $\cG''$ is $F^{(r)+}$-free since each hyperedge is between parts or contains a vertex from $B\cup\{v\}$. But $\cG''$ contains more hyperedges than $\cG$, a contradiction.

Finally, if $|B|=s-1$ then $\cG'$ must be empty, thus we are again in the situation where each hyperedge of $\cG$ either intersects $r$ parts, or contains at least one vertex of a set of at most $s-1$ vertices. We have already dealt with this case, completing the proof.

Finally, let us deal with the upper bound in \textbf{(ii)}. Let $\cG'$ denote again the subhypergraph of $\cG$ containing the hyperedges that intersect less than $r$ parts and avoid $B$. Note that $B$ is empty here. Assume first that there are two disjoint hyperedges in $\cG'$ such that each of them contains two vertices inside the same part. Then we can apply Proposition \ref{trivi} to obtain a copy of $B_{k+1,1}^{(r)+}$ in $\cG$, a contradiction.

Let us apply again the well-known fact mentioned before the proof (this special case is due to Hilton and Milner \cite{hm}). We obtain that either there are $O(n^{r-2})$ hyperedges in $\cG'$ (in which case $|E(\cG)|\le |E(T_r(n,k))|+O(n^{r-2})$, completing the proof), or there is a vertex $u$ contained in each hyperedge of $\cG'$. Assume without loss of generality that $u\in V_1$. 

Let $\cH$ denote the $(r-1)$-uniform hypergraph we obtain the following way. We take the hyperedges that contain at least two vertices in $V_1$, and remove $u$. Assume first that there are two disjoint hyperedges $H_1,H_2$ in $\cH$. Let $u_1\in (H_1\cap V_1)\setminus \{u\}$ and $u_2\in (H_2\cap V_1)\setminus \{u\}$. Then we take the edges $uu_1$ and $uu_2$. They are extended to a copy of $B_{k+1,1}$ with the edges of $G$, thus Proposition \ref{trivi} gives a contradiction. Applying the Hilton-Milner theorem again, we obtain that either $|\cH|=O(n^{r-3})$, or there is another vertex $v$ contained in each hyperedge of $\cH$. In the first case we are done, since each hyperedge of $\cG$ except the deleted ones are in $\cH'(m)$
for some $m$, and there are $\Omega(n^{r-2})$ additional hyperedges in $\cH'(m)$. In the
second case $|\cH|$ can be $\Omega(n^{r-2})$, but in this case $\cG$ is a subhypergraph of $\cH'(m)$, completing the proof.
\end{proof}

\section{Concluding remarks}

Another well-studied class of graph-based hypergraphs is called Berge hypergraphs \cite{gp1}. In expansions, the core graph $F$ is expanded vertex-disjointly; in Berge hypergraphs it is extended arbitrarily (thus we obtain a family of hypergraphs). The connection of Berge hypergraphs and generalized Tur\'an problems was established in \cite{gp2}, and it is even closer than in our case.

The Tur\'an number of Berge hypergraphs is subquadratic if the uniformity $r$ is large, a very different behavior from expansions, where $\Omega(n^{r-1})$ is a lower bound (except for stars). However, in the non-degenerate case they behave similarly, in particular for graphs of chromatic number more than $r$ and a color-critical edge we have equality. Our results (combined with results from \cite{ge3}) give the first non-degenerate examples ($B_{k+1,1}$ with $k\ge r$) where the Berge copies and expansions have different Tur\'an number.

There are other graph-based hypergraphs where the core $F$ is extended such a way that it is a Berge copy of $F$, and $F^{(r)+}$ fits the rules. One example is the \textit{induced Berge} or \textit{trace} (see \cite{fl}), when the vertices of the core cannot be used more than necessary, but any extension is possible outside (i.e., the trace of the hypergraph on $V(F)$ is $E(F)$). It would be interesting to extend our investigations to those hypergraphs and decide whether they can different from each other and expansions and Berge hypergraphs in the non-degenerate case.

\bigskip

\textbf{Funding}: Research supported by the National Research, Development and Innovation Office - NKFIH under the grants SNN 129364, FK 132060, and KKP-133819.





\end{document}